\theoremstyle{plain}
\newtheorem{theorem}{Theorem}[section]
\newtheorem{corollary}[theorem]{Corollary}
\newtheorem{lemma}[theorem]{Lemma}
\newtheorem{proposition}[theorem]{Proposition}
\renewcommand{\mod}[1]{\ ({\rm mod\ }#1)}
\theoremstyle{definition}
\theoremstyle{remark}
\newtheorem*{remark}{Remark}
\newcommand\numberthis{\stepcounter{equation}\tag{\theequation}}
\newcommand{\ssum}[1]{\sum_{\substack{#1}}}
\newcommand{\e}{{\rm e}}
\newcommand{\dd}{{\rm d}}
\renewcommand{\epsilon}{\varepsilon}
\newcommand{\1}{{\mathbf 1}}
\newcommand{\R}{{\mathbb R}}
\newcommand{\N}{{\mathbb N}}
\newcommand{\C}{{\mathbb C}}
\newcommand{\cB}{{\mathcal B}}
\newcommand{\cE}{{\mathcal E}}
\newcommand{\cH}{{\mathcal H}}
\newcommand{\cM}{{\mathcal M}}
\newcommand{\vphi}{{\varphi}}
\newcommand{\Lt}{{\tilde \Lambda_r}}
\DeclareMathOperator{\Kl}{Kl}
\numberwithin{equation}{section}
\title{Sign changes of Kloosterman sums and exceptional characters}
\date{\today}
\author{Sary Drappeau}
\address{Aix Marseille Université, CNRS, Centrale Marseille \\ I2M UMR 7373 \\ 13453 Marseille \\France}
\email{sary-aurelien.drappeau@univ-amu.fr}
\author{James Maynard}
\address{Mathematical Institute \\ Radcliffe Observatory Quarter \\ Woodstock Road \\ Oxford OX2 6GG \\ United Kingdom}
\email{james.alexander.maynard@gmail.com}
\subjclass[2010]{11L05, 11N36 (Primary); 11N75, 11L20, 11M20 (Secondary)}
\begin{document}

\begin{abstract}
We prove that the existence of exceptional real zeroes of Dirichlet $L$-functions would lead to cancellations in the sum~$\sum_{p\leq x} \Kl(1, p)$ of Kloosterman sums over primes, and also to sign changes of~$\Kl(1, n)$, where~$n$ runs over integers with exactly two prime factors. Our arguments involve a variant of Bombieri's sieve, bounds for twisted sums of Kloosterman sums, and work of Fouvry and Michel on sums of~$\left| \Kl(1, n)\right|$.
\end{abstract}

\maketitle{}

\section{Introduction}

\subsubsection*{Kloosterman sums}

For~$n\in\N_{>0}$ and a residue class~$a\mod{n}$, define the normalized Kloosterman sum as
$$ \Kl(a,n) = \frac1{\sqrt{n}} \ssum{\nu \mod{n} \\ (\nu, n)=1} \e\Big(\frac{\nu + a\bar{\nu}}n\Big), $$
where we write~$\e(z) = \e^{2\pi i z}$ and~$\bar{\nu}\nu \equiv 1\mod{n}$. These sums have a long history~\cite{Poincare, Kloosterman}, at the intersection of algebraic geometry and automorphic forms. The Weil bound~\cite{Weil, Matomaki} yields~$\left|\Kl(a,n)\right| \leq 2^{\omega(n)}$ if $32\nmid n$ and $\left|\Kl(a,n)\right|\le 2^{\omega(n)+1/2}$ in general, where~$\omega(n)$ is the number of distinct prime factors of~$n$. In particular, for a prime $p$
$$ \left|\Kl(1, p)\right|\leq 2. $$

Let~$\theta_{a, p}\in[0, \pi]$ be such that~$\Kl(a, p) = 2\cos(\theta_{a, p})$. The ``vertical'' Sato-Tate law, due to Katz~\cite{Katz}, asserts that the numbers
$$ \{ \theta_{a, p} \mid 1\leq a < p\} $$
become equidistributed, as~$p\to\infty$, with respect to the Sato-Tate measure~$\frac2\pi \sin(\theta)^2 \dd\theta$. The ``horizontal'' Sato-Tate conjecture is the claim that the numbers
$$ \{ \theta_{1, p} \mid p\leq x\} $$
become equidistributed with respect to the same measure, as~$x \to \infty$. This would of course imply that
$$ \sum_{p\leq x} \Kl(1, p) = o(\pi(x)) \qquad (x\to\infty). $$
Unfortunately the horizontal Sato-Tato conjecture is still open, and very little is known about this sum. Fouvry and Michel~\cite{FM-Changement} have obtained significant partial progress on replacing primes by almost-primes: they show that 
\[ \sum_{\substack{n<x \\ p|n\Rightarrow p>x^{1/23.9}}}(\left|\Kl(1,n)\right| \pm \Kl(1,n)) \gg \frac{x}{\log{x}}.
\]
In particular, it follows that there are infinitely many sign changes in the set~$\{\Kl(1, n), \omega(n)\leq 23\}$. After further work by many authors~\cite{FM-Bombieri, JSF-1, JSF-2, Matomaki, Xi-1}, the best know current result is due to Xi~\cite{Xi-2} and shows that there are infinitely many sign changes in the set~$\{\Kl(1, n), \omega(n)\leq 7\}$. We refer to the recent preprint~\cite{Xi-3} for more references and related questions.

\subsubsection*{Landau-Siegel zeroes}

In this work we study the implications of the existence of Landau-Siegel zeroes on this question. For~$D\geq 3$ an integer and~$\chi\mod{D}$ a real primitive character, define the value
$$ \eta_\chi := L(1, \chi) \log D. $$
It would follow from GRH that~$\eta_\chi \gg (\log D)/\log\log D$ and so $\eta_\chi\rightarrow\infty$ as $D\rightarrow \infty$. Unconditionally, however, Siegel's non-effective lower-bound~\cite{Siegel} $\eta_\chi \gg_\epsilon D^{-\epsilon}$ remains unsurpassed. It is as yet not even known if~$\eta_\chi$ is bounded away from~$0$; this is equivalent to the non-existence of real zeroes of~$L(s, \chi)$ close to~$1$. On the other hand, if there were a sequence of characters $\chi_1\mod{D_1}, \chi_2\mod{D_2},\dots$ such that $\eta_{\chi_i}\rightarrow 0$, then many desirable consequences would follow: the existence of twin primes~\cite{HB}, equidistribution of primes in arithmetic progressions to large moduli~\cite{FI-AP}, primes in very short intervals~\cite{FI-intervals}, or prime values of discriminants of elliptic curves~\cite{FI-discr-1,FI-discr}, for example.

We are interested in obtaining cancellations in the sum
$$ \ssum{p\le x}\Kl(1,p) $$
which go beyond the bound $2\pi(x)$ implied by the Weil bound should such exceptional characters exist.
\begin{theorem}\label{thm:MainTheorem}
Let $\epsilon>0$. Then there are constants $A,B>0$, depending only on $\epsilon$, such that for $D\ge 3$, $x\ge D^A$ and any primitive real character $\chi\mod{D}$, we have
\[
\Big| \sum_{p<x}\Kl(1,p) \Big|\le \pi(x)\Bigl(\epsilon+BL(1,\chi)\log{x}\Bigr).
\]
\end{theorem}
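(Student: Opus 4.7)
The plan is to (i) use the exceptional character $\chi$ via a parity identity to reduce to a $\chi$-twisted prime sum, (ii) apply a variant of Bombieri's sieve to decompose this twisted sum into bilinear pieces, and (iii) bound them via spectral estimates for Kloosterman sums. Splitting
$$ \Kl(1,p) = \tfrac{1}{2}(1+\chi(p))\Kl(1,p) + \tfrac{1}{2}(1-\chi(p))\Kl(1,p), $$
the classical consequence of a Siegel zero gives
$$ \sum_{p\leq x}(1+\chi(p))\log p \ll L(1,\chi)\frac{x\log x}{\log D} + x\exp(-c(\log x)^{3/5}), $$
via the explicit formula for $\psi(x,\chi)$ together with the standard bound $1-\beta\ll L(1,\chi)/\log D$ for the exceptional real zero $\beta$. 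Since $|\Kl(1,p)|\leq 2$ by Weil, Abel summation yields
$$ \Big|\sum_{p\leq x}(1+\chi(p))\Kl(1,p)\Big| \leq \pi(x)\bigl(\epsilon + BL(1,\chi)\log x\bigr) $$
for $x\geq D^A$ with $A$ sufficiently large in terms of $\epsilon$. It therefore suffices to prove the unconditional bound $|\sum_{p\leq x}\chi(p)\Kl(1,p)|\leq\tfrac{\epsilon}{2}\pi(x)$, uniformly in primitive real characters $\chi$ with $D\leq x^{1/A}$.

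To bound $S_\chi:=\sum_{p\leq x}\chi(p)\Kl(1,p)$, I would apply a combinatorial identity (Heath--Brown's, or the variant of Bombieri's sieve mentioned in the abstract) to expand $\chi(n)\Lambda(n)$ into multilinear convolutions. After dyadic partitioning, the problem reduces to bounding type I sums $\sum_{d\leq D_1}\alpha_d\sum_{m}g(m)\chi(dm)\Kl(1,dm)$ with $g$ smooth and $D_1$ in a restricted range, and type II sums $\sum_{d\sim D_1}\sum_{m\sim M}\alpha_d\beta_m\chi(dm)\Kl(1,dm)$ with arbitrary bounded coefficients and both variables in middle ranges. The Hecke-type multiplicativity $\Kl(1,dm)=\Kl(\overline{d}^{\,2},m)\Kl(\overline{m}^{\,2},d)$ for $(d,m)=1$ separates the $d$- and $m$-dependencies. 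Type I sums are handled by Poisson summation in $m$ together with Weil-type bounds for $\Kl(a,\cdot)$, augmented by Polya--Vinogradov cancellation from $\chi$. Type II sums are estimated by Cauchy--Schwarz and the Kuznetsov trace formula / Deshouillers--Iwaniec spectral large sieve applied to averages of $\chi$-twisted Kloosterman products. The Fouvry--Michel bounds on $\sum_{n\leq x}|\Kl(1,n)|$ serve to control diagonal contributions and the almost-prime residue of the sieve identity.

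The main obstacle is uniformity in the conductor $D$ for the type II bilinear estimates. Since the theorem allows $D$ as large as $x^{1/A}$ for a fixed $A=A(\epsilon)$, no positive power of $D$ can be lost in the spectral bounds. Capturing the ``conductor drop'' obtained by pairing the Kloosterman trace function $\Kl(1,\cdot)$ with the primitive real character $\chi$ — so that the effective spectral conductor remains controlled in terms of $x$ rather than of $xD^{c}$ — is the technical heart of the argument.
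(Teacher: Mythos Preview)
Your initial splitting is valid, and the bound on $\sum_{p\leq x}(1+\chi(p))\Kl(1,p)$ via the exceptional zero is standard. The gap is in the second half: you reduce to proving
\[
\Big|\sum_{p\leq x}\chi(p)\Kl(1,p)\Big| \leq \tfrac{\epsilon}{2}\pi(x)
\]
\emph{unconditionally}, uniformly over all primitive real $\chi\mod{D}$ with $D\leq x^{1/A}$. This is not a reduction at all---it is at least as hard as the original problem. Indeed, for any fixed small $D$ (say $D=3$, where no Siegel zero is expected), the sum $\sum_{p}\chi(p)\Kl(1,p)$ is a signed combination of $\sum_{p\equiv a\,(D)}\Kl(1,p)$ over residue classes, and bounding these by $o(\pi(x))$ is precisely the horizontal Sato--Tate problem, which is open. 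The twist by $\chi$ gives no usable extra cancellation here.

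Concretely, the step that fails is the type~II estimate. After Heath--Brown and Cauchy--Schwarz you face sums of the shape
\[
\sum_{m\sim M}\Big|\sum_{d\sim N}\alpha_d\,\Kl(1,dm)\Big|^2
\]
with $M,N$ both near $x^{1/2}$. The twisted multiplicativity $\Kl(1,dm)=\Kl(\bar d^{\,2},m)\Kl(\bar m^{\,2},d)$ does not separate the variables: each factor depends on both $d$ and $m$ through the residue $\bar d$ or $\bar m$, so Kuznetsov/Deshouillers--Iwaniec does not apply in any known way to give a power saving. You flag this yourself as ``the main obstacle'' and ``the technical heart,'' but you do not supply an argument; none is currently known.

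The paper proceeds quite differently. It never attempts a type~II bound. Instead it writes
\[
\Lambda(n)=\sum_{d\mid n}\mu(d)\log\frac{\sqrt n}{d},
\]
uses the Friedlander--Iwaniec substitution $\mu\approx\chi$ (so $\mu=\chi\ast\nu$ with $\nu=\mu\ast\mu\chi$ supported on a thin set controlled by $L(1,\chi)$), and splits the resulting convolution into three ranges. Two of them ($b>1$; and $\min(a,c)\leq x^{1/2}/y^3$) are handled respectively by the Siegel--zero hypothesis and by a type~I level-of-distribution estimate for $\chi$-twisted Kloosterman sums up to $x^{1/2-\epsilon}$ (Proposition~\ref{prop:distrib-12}). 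The remaining ``central'' range $a,c\asymp x^{1/2}$, which is exactly where a type~II sum would arise, is instead bounded \emph{pointwise}: the weight $\log(a/c)$ is $O(\log y)$ there, and the Fouvry--Michel averaged bound $\sum_{n}|\Kl(1,n)|\theta'(n)\ll x(\log x/\log z)^{16/3\pi}$ (coming from Hooley's trick and the vertical Sato--Tate law, with the crucial exponent $16/3\pi<2$) shows this range is genuinely negligible. That last device---replacing the missing bilinear estimate by an averaged upper bound on $|\Kl(1,n)|$---is the idea your proposal is missing.
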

This statement is unconditional, but is only non-trivial if the value~$\eta_\chi$ is suitably small. We note that if there is a sequence of characters with $\eta_{\chi_i}\rightarrow 0$, then Theorem \ref{thm:MainTheorem} shows that for a suitable sequence of values of $x$,
\[
\sum_{p<x}\Kl(1,p)=o(\pi(x)),
\]
as predicted by the horizontal Sato-Tate conjecture. 

Unfortunately we do not know unconditionally the expected lower bound
\begin{equation}\label{eq:LowerBound}
\sum_{p<x}\left|\Kl(1,p)\right|\gg \pi(x).
\end{equation}
In particular, even if there was a sequence of characters with $\eta_{\chi_i}\rightarrow 0$, we would not be able to conclude from Theorem \ref{thm:MainTheorem} that there are even infinitely many sign changes in the sequence $\Kl(1,p)$. If instead of considering primes we consider products of exactly two primes, then the equivalent lower bound to \eqref{eq:LowerBound} is known thanks to work of Fouvry-Kowalski-Michel \cite{FKM}. For technical reasons, when working with products of two primes we consider the variant
$$ S(x) = \ssum{p, q\text{ prime}} \phi\Big(\frac{pq}x\Big) \log(pq) (\log p)(\log q) \Kl(1, pq) $$
where~$\phi:\R_+ \to \C$ is a smooth function compactly supported inside~$\R_+^*$. We note that the Weil bound implies unconditionally that $S(x)\ll x\log^2{x}$, whilst a variant of the horizontal Sato-Tate conjecture would suggest that we should have $S(x)=o_{\phi}(x\log^2{x})$. 
\begin{theorem}\label{thm:TwoPrimes}
Let~$\epsilon>0$. Then there exist~$A, B>0$, depending at most on~$\epsilon$, such that for any~$D\geq 3$, $x\ge D^A$ and any primitive real character~$\chi\mod{D}$, we have
\begin{equation}
|S(x)| \ll_\phi x(\log x)^2 \big\{\epsilon + B L(1,\chi)\log x \big\}.\label{eq:majo-S-prop}
\end{equation}
The implied constant depends only on the function $\phi$.
\end{theorem}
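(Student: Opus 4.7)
The plan is to exploit the exceptional character $\chi$ in the spirit of Heath-Brown's twin-prime work, now adapted to products of two primes via a variant of Bombieri's asymptotic sieve. The driving principle is that when $L(1,\chi)$ is small, the M\"obius function is well approximated on average by $-\chi$ at scales up to a power of $D$, because $\zeta(s) L(s,\chi)$ has a near-doubled zero at $s=1$; this substitution can be quantified with an error controlled linearly by $L(1,\chi)\log x$ after summing against smooth weights.

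Expand the sum over primes into a sum over all integers, at the cost of an $O_\phi(x^{1/2 + o(1)})$ prime-power error, using $(\Lambda\ast\Lambda)(pq) = 2\log p\log q$ for $p\neq q$:
\[
2S(x) = \sum_n \phi(n/x)\log(n)(\Lambda\ast\Lambda)(n)\Kl(1,n) + O_\phi(x^{1/2+o(1)}).
\]
Next, following the Bombieri asymptotic sieve, expand $\Lambda = -\mu\ast\log$ in each factor and substitute $\mu \to -\chi$ on the M\"obius variables truncated at a scale $y = x^{\eta}$. This yields a ``substituted'' main sum of the shape
\[
\sum_n \phi(n/x)(\chi\ast\chi\ast\log\ast\log)(n)\Kl(1,n)
\]
(together with lower-order cross terms arising from the truncation) plus a character-defect remainder of size $\ll L(1,\chi)\log(x)\cdot x(\log x)^2$, which accounts for the $BL(1,\chi)\log x$ contribution to the final estimate.

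The substituted sum is then dispatched by bilinear Kloosterman bounds. After dyadic decomposition of every convolution variable it reduces to bounding forms
\[
\sum_{m\asymp M}\sum_{n\asymp N}\alpha_m\beta_n \Kl(1,mn), \qquad MN \asymp x,
\]
with $\alpha_m, \beta_n$ built from short convolutions of $\chi$ and $\log$. Estimates from the Kuznetsov formula (Deshouillers--Iwaniec) or from trace-function techniques (Fouvry--Kowalski--Michel, Kowalski--Michel--Sawin) furnish a power saving $(MN)^{1-\delta}$ as soon as both $M$ and $N$ exceed a fixed positive power of $x$. The genuinely two-prime structure of $S(x)$ is crucial here: it automatically avoids the Type-I regime where one variable has length $x^{o(1)}$, and this is the structural reason why Theorem~\ref{thm:TwoPrimes} is cleaner than Theorem~\ref{thm:MainTheorem}. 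The main technical obstacle is arranging the sieve decomposition so that every bilinear form produced falls into the admissible range of these bounds while keeping the combinatorial complexity bounded in a way that does not spoil the logarithmic dependencies; once this is done, optimizing $\eta$ to balance the character-defect remainder against the bilinear error, and taking $A = A(\epsilon)$ large enough that the condition $x \ge D^A$ forces the bilinear error to be $\ll \epsilon\cdot x(\log x)^2$, yields the theorem.
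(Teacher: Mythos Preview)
The proposal has a genuine gap in the treatment of the substituted main term. You claim that after replacing $\mu$ by $-\chi$ the sum $\sum_n\phi(n/x)\log n\,(\chi\ast\chi\ast\log\ast\log)(n)\Kl(1,n)$ can be handled by bilinear Kloosterman bounds giving a power saving whenever both variables exceed $x^\delta$. No such general Type~II estimate is available: a power-saving bound for $\sum_{m\asymp M}\sum_{n\asymp N}\alpha_m\beta_n\Kl(1,mn)$ with arbitrary bounded coefficients and $M\asymp N\asymp x^{1/2}$ would, combined with the Fouvry--Kowalski--Michel lower bound for $\sum|\Kl(1,pq)|$, already yield \emph{unconditional} sign changes of $\Kl(1,pq)$ --- precisely the conclusion the paper obtains only under a Siegel-zero hypothesis. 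The only input actually at hand is the level-of-distribution estimate of Proposition~\ref{prop:distrib-12}, a Type~I bound requiring a smooth inner weight and a modulus below $x^{1/2-\epsilon}$; it says nothing about the diagonal range.

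Your assertion that ``the two-prime structure automatically avoids the Type-I regime'' conflates two different obstructions. It is true that primes $p\le x^\delta$ contribute negligibly to $S(x)$, but after the substitution the sum ranges over all integers, and the real obstacle is the \emph{central point} $a\approx c\approx\sqrt{n}$, not small variables. The paper addresses this by writing $\Lambda\ast\Lambda$ in the form $\tilde\Lambda_2(n)=\sum_{d\mid n}\mu(d)(\log(\sqrt{n}/d))^2$, so that after $\mu=\chi\ast\nu$ the $b=1$ main term becomes $\tfrac14\sum_{ac=n}\chi(a)(\log(a/c))^2$, whose weight is small exactly when $a\approx c$. This Bombieri-type saving by itself is still insufficient (it is cancelled by the sieve loss, as in the parity problem); the decisive extra ingredient is the Fouvry--Michel result that on average $|\Kl(1,n)|$ behaves like $(8/(3\pi))^{\omega(n)}$ rather than $2^{\omega(n)}$, which buys an additional factor $\epsilon^{2-16/(3\pi)}$ in the central sum $S_C(x)$. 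Neither the centred logarithmic weight nor the average-$|\Kl|$ input appears in your outline, and without them the argument does not close.
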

As with Theorem \ref{thm:MainTheorem}, this is unconditional but non-trivial only if a sequence of exceptional characters exist. Thus, in the presence of Siegel zeros, we are able to establish infinitely many sign changes of~$\Kl(1,pq)$. 
\begin{corollary}
For some absolute constants~$A, c>0$, if~$\eta_\chi \leq c$, then every interval~$[x, 2x] \subset [D^{A}, D^{100A}]$ contains two numbers~$(n_1, n_2)$ with~$\omega(n_1)=\omega(n_2)=2$, and $\Kl(1, n_1) \Kl(1, n_2) < 0$.
\end{corollary}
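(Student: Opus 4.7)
The plan is a standard sign-detection argument: compare the signed sum~$S(x)$ with a positive ``absolute value'' counterpart,
\[ S^{\mathrm{abs}}(x) := \ssum{p, q\text{ prime}} \phi\Big(\frac{pq}x\Big) \log(pq)(\log p)(\log q)\, |\Kl(1, pq)|, \]
where $\phi\geq 0$ is a fixed non-zero smooth test function supported in~$[1, 2]$. The lower bound of Fouvry--Kowalski--Michel~\cite{FKM} alluded to in the discussion preceding Theorem~\ref{thm:TwoPrimes} should give $S^{\mathrm{abs}}(x) \geq c_\phi\, x (\log x)^2$ for some $c_\phi > 0$ and all sufficiently large~$x$. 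The contribution of the diagonal $p = q$ to $S^{\mathrm{abs}}$ is only $O(\sqrt{x}(\log x)^3)$ and can be discarded, so the lower bound survives restriction to $p \neq q$.

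Next, apply Theorem~\ref{thm:TwoPrimes} with $\epsilon := c_\phi/(4C_\phi)$, where $C_\phi$ is the implied constant of~\eqref{eq:majo-S-prop}, producing the corresponding constants~$A, B$. For $x \in [D^A, D^{100A}]$ one has $\log x \leq 100 A \log D$, so under the hypothesis $\eta_\chi \leq c$,
\[ B L(1, \chi) \log x \leq 100 A B c. \]
Taking~$c$ small enough in terms of $A, B, c_\phi, C_\phi$ then forces $|S(x)| \leq \tfrac12 S^{\mathrm{abs}}(x)$, and hence both of $S^{\mathrm{abs}}(x) \pm S(x)$ are strictly positive.

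Since $|\Kl(1, n)| - \Kl(1, n)$ vanishes whenever $\Kl(1, n) \geq 0$, the strict positivity of $S^{\mathrm{abs}}(x) - S(x)$ forces the existence of some $n_1 = p_1 q_1$ with $p_1 \neq q_1$, $n_1 \in [x, 2x]$, and $\Kl(1, n_1) < 0$; symmetrically $S^{\mathrm{abs}}(x) + S(x) > 0$ produces $n_2 = p_2 q_2 \in [x, 2x]$ with $p_2 \neq q_2$ and $\Kl(1, n_2) > 0$. Both satisfy $\omega = 2$, giving the corollary.

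The main obstacle is extracting a lower bound of matching order~$x(\log x)^2$ for the smoothly weighted sum $S^{\mathrm{abs}}(x)$ from the result of~\cite{FKM}, which is phrased for an unweighted sum over products of two primes up to~$x$. I expect this to follow by partial summation and positivity after restricting to a balanced dyadic box $p, q \asymp \sqrt{x}$, but it is the one place in the argument that genuinely requires care; everything else is bookkeeping of the constants $\epsilon, A, B, c$.
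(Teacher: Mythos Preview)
Your approach is essentially the paper's: compare the signed sum $S(x)$ against an absolute-value version and invoke the Fouvry--Kowalski--Michel lower bound. The paper sidesteps your ``main obstacle'' by choosing $\phi$ real-valued with $\phi \geq \mathbf{1}_{[1,2]}$ (rather than $\phi$ supported inside $[1,2]$): since Proposition~5.1 of~\cite{FKM} already gives the lower bound for the sharp-cutoff sum $A(x)=\sum_{x<pq\le 2x}(\log pq)(\log p)(\log q)|\Kl(1,pq)|$, the pointwise inequality $\phi\geq \mathbf{1}_{[1,2]}\geq 0$ immediately yields $S^{\mathrm{abs}}(x)\geq A(x)\gg x(\log x)^2$, with no partial summation needed. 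The only cost is that the sign changes are then located in $x\cdot\mathrm{supp}(\phi)$ rather than exactly $[x,2x]$, which is harmless up to renaming constants.
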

\begin{proof}
Choose~$\phi$ to be real-valued with~$\phi \geq \1_{[1, 2]}$, and consider the unsigned sum
$$ A(x) = \ssum{p, q \\ x < pq \leq 2x} (\log pq) (\log p)(\log q) \left|\Kl(1, pq)\right|. $$
We have the following lower bound, due to Fouvry-Kowalski-Michel~\cite[Proposition 5.1]{FKM}:
\begin{equation}\label{eq:mino-A}
A(x) \gg x(\log x)^2 \qquad (x\geq 2).
\end{equation}
Comparing~\eqref{eq:majo-S-prop} and~\eqref{eq:mino-A} yields the claimed statement.
\end{proof}

\subsection*{Notations}

We denote by~$P^-(n)$ (resp. $P^+(n)$) the smallest (resp. largest) prime factor of~$n$, with the conventions~$P^-(1)=\infty$ and~$P^+(1)=1$.

\section{Outline}
If there is a character $\chi\mod{D}$ such that $L(1,\chi)$ is very small, then `most' primes $p\in[D^A,D^{100A}]$ have $\chi(p)=-1=\mu(p)$, for some suitable constant $A$. By multiplicativity, this means that the (poorly understood) Moebius function $\mu$ can be well-approximated by the character $\chi$ (which is periodic $\mod{D}$, and so better understood) on integers $n<D^{100A}$ with no small prime factors. In particular, for $n\le D^{100A}$
\begin{equation}\label{eq:approx-chi-mu}
\Lambda(n)=(\mu\ast\log)(n)\approx (\chi\ast\log)(n),
\end{equation}
and so we can approximate $\Lambda$ by the convolution of two simpler sequences. By applying the hyperbola method, this would allow us to estimate a sum $\sum_{n<x}\Lambda(n)a_n$ provided we could suitably estimate
\begin{equation}
\sum_{d\le Y_1}\chi(d)\sum_{\substack{n\le x\\ d|n}}a_n\log\frac{n}{d},\quad\text{and}\quad \sum_{d\le Y_2}\log{d}\sum_{\substack{n<x\\ d|n}}a_n\chi\Bigl(\frac{n}{d}\Bigr)
\label{eq:HyperbolaSums}
\end{equation}
for some choice of $Y_1,Y_2$ with $Y_1Y_2=x= D^{100A}$. Often one can suitably estimate such sums for $Y_1=Y_2=x^{1/2-\epsilon}$, which just falls short of this requirement. Much of the work on the distribution of primes under the assumption of a Siegel-Landau zero followed this strategy, and the key technical challenge is then to obtain a suitable estimate for one of the sums in \eqref{eq:HyperbolaSums} with $Y_1$ or $Y_2$ slightly beyond $x^{1/2}$. 

In our situation, $a_n=\Kl(1,n)$, and estimates for the two sums in \eqref{eq:HyperbolaSums} with $Y_1=Y_2=x^{1/2-\epsilon}$ are obtained in essentially the same way as Fouvry and Michel~\cite{FM-Changement}. Unfortunately we do not know how to extend this work beyond $x^{1/2}$, and so this strategy fails. However, in the convolution identity $\Lambda(n)=\sum_{d|n}\mu(d)\log(n/d)$ it is only terms with $d\in[x^{1/2-\epsilon},x^{1/2+\epsilon}]$ which we are unable to handle. We note the alternative identity
\[
\Lambda(n)=\sum_{d|n}\mu(d)\Bigl(\log\frac{\sqrt{n}}{d}\Bigr).
\]
The presence of the term $\log(\sqrt{n}/d)$ means we expect that terms with $d\approx \sqrt{n}$ to contribute less, and so we might hope that these central values would be negligible. This is a variant of the idea that Bombieri introduced in his asymptotic sieve \cite{Bombieri}, where terms in $\Lambda_2(n)=\sum_{d|n}\mu(d)\log^2(n/d)$ from $d<n^{1-\epsilon}$ could be handled by assumptions on equidistribution of congruence sums, and terms with $d\in[n^{1-\epsilon},n]$ could be bounded by virtue of the fact that $\log^2(n/d)$ was small in this range. 

Unfortunately, as in Bombieri's work, this strategy fails if we wish to count primes. To maintain the feature that the support is essentially restricted to numbers with no small prime factors one multiplies by a short sieve weight, which loses a factor $\epsilon^2$ from the two variables $d$ and $n/d$. This precisely cancels out the gains of a factor $\epsilon^2$ coming from the range of $d$ and from the size of $\log(\sqrt{n}/d)$. Whilst this issue might appear to be a technicality, at least in Bombieri's work this is an expression of the fundamental parity problem of sieve methods. If instead we counted with a weight involving a higher power of $\log(\sqrt{n}/d)$ (thereby counting products of a bounded number of primes), then this strategy can succeed.

In our case, we are interested in $a_n=\Kl(1,n)$. Although in general we expect the Weil bound $\left|\Kl(1,n)\right|\le 2^{\omega(n)}$ to be essentially sharp, for \textit{most} integers $n$ we expect $\left|\Kl(1,n)\right|$ is actually much smaller than this. Indeed, the horizontal Sato-Tate conjecture would predict that for any fixed~$a$, the average size of $\left|\Kl(a,p)\right|$ is $\frac{2}{\pi}\int_0^\pi 2|\cos(t)| \sin^2(t)dt=8/3\pi<1$. By multiplicativity, we might then expect $\left|\Kl(1,n)\right|\approx (8/3\pi)^{\omega(n)}$ on average over $n$. Fouvry and Michel \cite{FM, FMerratum} combined an argument of Hooley~\cite{Hooley} based on the identity $\Kl(1,ab)=\Kl(\overline{a^2},b)\Kl(\overline{b^2},a)$ (for coprime $a,b$), with the \textit{vertical} Sato-Tate law, to show unconditionally that on average the factor $2^{\omega(n)}$ can be indeed improved to $(8/3\pi)^{\omega(n)}$. Since $8/3\pi<1$, numbers with a larger number of prime factors contribute less to the problematic sums, and so there is less of a loss from being restricted to a short sieve weight. This ultimately allows us to win an additional factor of $(\epsilon^{1-8/3\pi})^2$ for these sums involving middle sized $d$, which is enough to conclude that such terms make a negligible contribution, and so we are able to bound $\sum_{p<x}\Kl(1,p)$.

\section{Preparatory Lemmas}

\subsection{Level of distribution for twisted Kloosterman sums}\label{sec:LevelOfDistribution}
Here we make precise the claim that the sums in \eqref{eq:HyperbolaSums} can be estimated with $Y_1=Y_2=x^{1/2-\epsilon}$ by a variation of the work of Fouvry-Michel \cite{FM-Changement}.
\begin{proposition}\label{prop:distrib-12}
Let~$\epsilon>0$ be fixed. There exists~$\eta = \eta(\epsilon)>0$, such that for any real~$x\geq 2$, positive integer~$D\leq x^\eta$ and all characters~$\chi\mod{D}$, we have
\begin{equation}
\ssum{q\leq x^{1/2 - \epsilon}} \Big| \ssum{n \equiv 0\mod{q}} \phi\Big(\frac nx\Big) \chi(n) \Kl(1,n)\Big| \ll_{\epsilon, \phi} x^{1-\eta}.\label{eq:distrib-12}
\end{equation}
\end{proposition}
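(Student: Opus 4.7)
The plan is to mirror the spectral approach of Fouvry–Michel \cite{FM-Changement}, modified to incorporate the twist by $\chi$. Dualizing by unimodular weights $\alpha_q$, it suffices to bound
$$T = \sum_{q \leq Q} \alpha_q \sum_{m \geq 1} \phi(qm/x)\, \chi(qm)\,\Kl(1, qm), \qquad Q = x^{1/2-\epsilon}.$$
After discarding the $(q, m) > 1$ contribution via the Weil bound (an $O(x^{1-c\epsilon})$ error), Hooley's identity $\Kl(1, qm) = \Kl(\bar{m}^2, q)\,\Kl(\bar{q}^2, m)$ for coprime $(q, m)$ separates $T$ into
$$\sum_{q \leq Q} \alpha_q \chi(q) \sum_{(m, q)=1} \phi(qm/x)\,\chi(m)\,\Kl(\bar{m}^2, q)\,\Kl(\bar{q}^2, m),$$
in which the first Kloosterman factor is a bounded, $q$-periodic arithmetic weight in $m$, while the second is a genuine Kloosterman sum in $m$.

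For each fixed $q$, I would open the periodic factor $\Kl(\bar{m}^2, q)$ as a sum of additive characters $\e(am/q)$ with $a$ ranging over residues modulo $q$, reducing the problem to bounding, on average over $a$, sums of the form
$$U(a, q) := \sum_m \phi(qm/x)\,\chi(m)\,\e(am/q)\,\Kl(\bar{q}^2, m).$$
This is precisely the type of expression amenable to the Kuznetsov trace formula together with the Deshouillers–Iwaniec large-sieve inequalities for sums of Kloosterman sums: one obtains a power saving of the shape $U(a, q) \ll (x/q)^{1-\delta}$ for some $\delta = \delta(\epsilon) > 0$, as long as the effective conductor of the test function, which has size at most $qD$, stays much smaller than the length $x/q$. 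This is exactly the content of $q \leq x^{1/2-\epsilon}$ combined with $D \leq x^\eta$ for $\eta$ small enough.

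Reassembling the estimates over $q \leq Q$ and over the frequencies $a \bmod q$ yields a total bound of the shape $x^{1-\eta}$, once $\eta = \eta(\epsilon)$ has been chosen sufficiently small. The main technical obstacle lies in tracking the $D$-dependence through the spectral step: the twist by $\chi$ effectively raises the conductor of the $m$-sum from $q$ to $qD$, and the savings from Kuznetsov must comfortably dominate the resulting polynomial losses in $D$ and the $q$-aspect losses from opening $\Kl(\bar{m}^2, q)$. This balance is precisely what forces the hypothesis $D \leq x^\eta$, and fixes the admissible range of $\eta$ in terms of $\epsilon$.
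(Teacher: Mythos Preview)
Your overall strategy---reduce to a spectral estimate via Kuznetsov---is right, but the specific route through Hooley's identity and a Fourier expansion of $\Kl(\bar m^2,q)$ has a genuine gap. The function $m\mapsto\Kl(\bar m^2,q)$ depends on $m$ through $\bar m^2\pmod q$, not linearly, so ``opening it as additive characters $e(am/q)$'' means taking its discrete Fourier transform on $\Z/q\Z$; the coefficients are themselves two-variable character sums, and generically one only has $\sum_a|\hat f(a)|\ll q^{1/2}2^{\omega(q)}$ via Parseval and Cauchy--Schwarz. That $q^{1/2}$ loss is fatal: even granting $U(a,q)\ll(x/q)^{1-\delta}$, the total is $\asymp x^{1-\delta}\sum_{q\le Q}q^{\delta-1/2}\asymp x^{1-\delta}Q^{1/2+\delta}$, which beats $x$ only if $\delta>\tfrac12-O(\epsilon)$, far beyond what Kuznetsov yields. (In Fouvry--Michel, Hooley's identity is used for the \emph{unsigned} averages of $|\Kl(1,n)|$, not for the level-of-distribution statement; the latter is a direct Kuznetsov argument.) Moreover, even after your reduction, $U(a,q)$ still carries the multiplicative twist $\chi(m)$, and you have not said how the Deshouillers--Iwaniec machinery absorbs it; this is precisely the new technical content.

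The paper avoids both problems by never invoking Hooley's identity here. For each $q$ with $(q,D)=1$ it uses the identification (Lemma~4.3 of~\cite{D})
\[
\chi(n)\,\Kl(1,n)=\chi(q)\,e(-\bar q/D)\,S_{\infty,1/q}(D,1;n\sqrt D)\qquad(n\equiv 0\pmod q),
\]
where $S_{\infty,1/q}$ is the Kloosterman sum for the cusp pair $(\infty,1/q)$ of $\Gamma_0(qD)$ with nebentypus $\bar\chi$. One application of the Kuznetsov formula on $\Gamma_0(qD)$ with nebentypus then turns the inner $n$-sum into $\sqrt{x}(\cH+\cE+\cM)$; the spectral pieces are bounded by Topacogullari's Lemmas~2.8--2.9, and the exceptional spectrum by Kim--Sarnak ($\theta\le 7/64$). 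Summing over $q\le x^{1/2-\epsilon}$ gives $\ll x^{1+\eta-\epsilon}+x^{1+2\eta-(1/2-2\theta)\epsilon}$, hence~\eqref{eq:distrib-12} for any $\eta\le 3\epsilon/16$. The point is that $\chi$ must be built into the level and nebentypus of the Kuznetsov formula from the outset, not carried externally through a Hooley decomposition.
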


\begin{remark}
Note that the case~$D=1$ of the previous statement is a weaker form of Proposition 2.1 of~\cite{FM-Changement}.
\end{remark}

\begin{proof}
We may plainly assume that the sums are restricted to~$(q, D)=(n, D)=1$. The bound we claim is a variant of Proposition 2.1 of~\cite{FM-Changement}, which is based on the Kuznetsov formula~\cite{Kuz, DI}, the Weil bound for Kloosterman sums, and a uniform bound~$\theta\leq 1/4-\epsilon$ towards Ramanujan-Petersson, which was first due to Luo-Rudnick-Sarnak~\cite{LRS}.

The difference in our case is the presence of the character. Recently, Blomer and Mili\'{c}evi\'{c}~\cite{BM} have succeeded in analysing such sums in the context of modular forms with non-trivial nebentypus; another argument was used in~\cite{D}, which is simpler for our purpose here. We will rely on work of Topacogullari~\cite{Topacogullari} to estimate the spectral sums.

In our case, we will use the notations and normalization described in section 4.1.2 of~\cite{D}. Our aim is to apply the Kuznetsov formula~\cite[Lemma~4.5]{D} for the group~$\Gamma_0(qD)$, nebentypus~$\bar{\chi}\mod{D}$, with cusps~$\infty$ and~$1/q$, and parameters~$m\gets D$, $n\gets 1$. For each~$q$ in the left-hand side of~\eqref{eq:distrib-12}, Lemma~4.3 of~\cite{D}, with the choice of scaling matrices (depending only on~$q$ and $D$) given there, yields
$$ \chi(n) \Kl(1, n) = \chi(q) \e(-\bar{q}/D) S_{\infty, 1/q}(D, 1 ; n\sqrt{D}). $$
Let~$\kappa\in\{0, 1\}$ be such that~$\chi(-1) = (-1)^\kappa$. The Kuznetsov formula with test function~$\psi(t) = \phi(4\pi/(tx)) \sqrt{4\pi/(tx)}$ yields
$$ \overline{\chi(q)}\e(\bar{q}/D) \ssum{n \equiv 0\mod{q}} \phi\Big(\frac nx\Big) \chi(n) \Kl(1,n) = \sqrt{x}\big\{ \cH + \cE + \cM\big\}, $$
where
$$ \cH = \ssum{k>\kappa \\ k\equiv \kappa \mod{2}} \sum_{f\in \cB_k(qD, \bar{\chi})} {\dot\psi}(k) \Gamma(k) \overline{\rho_{f,\infty}(D)} \rho_{f, 1/q}(1), $$
$$ \cE = \sum_{{\mathfrak c}\text{ sing.}} \frac1{4\pi} \int_{-\infty}^{\infty} \frac{{\tilde \psi}(t)}{\cosh(\pi t)}\overline{\rho_{{\mathfrak c},\infty}(D, t)} \rho_{{\mathfrak c}, 1/q}(1, t) \dd t, $$
$$ \cM = \ssum{f\in \cB(qD, \bar{\chi})} \frac{{\tilde\psi}(t_f)}{\cosh(\pi t_f)} \overline{\rho_{f,\infty}(D)} \rho_{f, 1/q}(1). $$
Split~$\cM = \cM_1 + \cM_2$ where~$\cM_1$ is the contribution of those~$f$ with~$t_f\in\R$, and~$\cM_2$ is the remainder contribution, which consists of~$f$ with~$t_f\in[-i/4, i/4]$. Consider first~$\cM_2$. Using the bound $|{\tilde \psi}(t_f)| \ll x^{2|t_f|}$, we obtain by the Cauchy-Schwarz inequality
$$ \cM_2 \ll \Big(\sum_{f\in \cB(qD, \bar{\chi})} x^{2|t_f|} |\rho_{f,\infty}(D)|^2 \Big)^{1/2} \Big(\sum_{f\in \cB(qD, \bar{\chi})} x^{2|t_f|} |\rho_{f, 1/q}(1)|^2\Big)^{1/2}. $$
We have~$q\sqrt{D} \leq x^{1/2-\epsilon + \eta} \leq x^{1/2}$, so that Lemma~2.9 of~\cite{Topacogullari} may be applied to both sums, which yields, as~$x\to \infty$,
\begin{equation}
\cM_2 \ll x^{o(1)} \Big(\frac{\sqrt{xD}}q\Big)^{4\theta} \Big\{1 + \frac{\sqrt{D}}q\Big\} \ll x^{2\eta} (x/q^2)^{2\theta}.\label{eq:kuz-bound-exc}
\end{equation}
The contribution of~$\cE$, $\cH$ and~$\cM_1$ is handled by similar standard arguments~\cite[page~267]{DI}, using instead Lemma 2.8 of~\cite{Topacogullari}. We obtain
\begin{equation}
\cH + \cE + \cM_1 \ll x^{o(1)} D^{1/2} \ll x^\eta \label{eq:kuz-bound-reg}
\end{equation}
as~$x\to \infty$.  Grouping our bounds~\eqref{eq:kuz-bound-reg} and~\eqref{eq:kuz-bound-exc}, we find
$$ \ssum{q\leq x^{1/2-\epsilon}} \Big| \ssum{n \equiv 0\mod{q}} \phi\Big(\frac nx\Big) \chi(n) \Kl(1,n)\Big| \ll_{\epsilon, \phi} x^{1 + \eta - \epsilon} + x^{1+2\eta-(1/2-2\theta)\epsilon}. $$
By work of Kim-Sarnak~\cite{Kim}, we are ensured that~$\theta \leq 7/64 <1/4$, and therefore our claimed bound follows if~$Q\leq x^{1/2-\epsilon}$ and~$0<\eta \leq 3\epsilon/16$.
\end{proof}

From Proposition~\ref{prop:distrib-12} and the convolution~$\mu^2(n) = \sum_{d^2|n} \mu(d)$, we deduce the analogous bound with~$n$ restricted to square-free numbers.
\begin{corollary}\label{cor:distrib-12-sqf}
In the setting and notations of Proposition~\ref{prop:distrib-12}, we have
\begin{equation}
\ssum{q\leq x^{1/2 - \epsilon}} \Big| \ssum{n \equiv 0\mod{q}} \mu^2(n) \phi\Big(\frac nx\Big) \chi(n) \Kl(1,n)\Big| \ll_{\epsilon, \phi} x^{1-\eta}\label{eq:distrib-12-sqf}
\end{equation}
for a possibly smaller value of~$\eta>0$, but depending on~$\epsilon$ at most.
\end{corollary}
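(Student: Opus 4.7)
The plan is to use the identity $\mu^2(n) = \sum_{d^2 \mid n} \mu(d)$, exchange summations, and split the resulting sum over $d$ at a threshold $d = x^\delta$ for a small parameter $\delta = \delta(\epsilon)>0$ to be chosen. Writing
\[
\ssum{n\equiv 0\mod q} \mu^2(n)\phi(n/x)\chi(n)\Kl(1,n) = \sum_{d} \mu(d) \ssum{n\equiv 0\mod q \\ d^2\mid n} \phi(n/x)\chi(n)\Kl(1,n),
\]
one notices that the inner divisibility condition is equivalent to $\mathrm{lcm}(q,d^2)\mid n$, so each pair $(q,d)$ effectively contributes to a sum over multiples of $q_1 := \mathrm{lcm}(q,d^2)$. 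Since $q_1 \leq qd^2$, the key idea is that if $d$ is not too large then $q_1$ still lies below a level of distribution accessible by Proposition~\ref{prop:distrib-12} (at a slightly weakened value of $\epsilon$).

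For the small-$d$ range $d \leq x^\delta$ with $2\delta < \epsilon/2$, after triangle inequality and reindexing the inner sum by $q_1$, one observes that for each $q_1$ the number of $q \leq x^{1/2-\epsilon}$ with $\mathrm{lcm}(q,d^2) = q_1$ is at most $\tau(d^2) \ll x^{o(1)}$. This yields
\[
\sum_{d \leq x^\delta} \Big| \sum_{q\le x^{1/2-\epsilon}} \ssum{q|n\\ d^2|n} \phi(n/x)\chi(n)\Kl(1,n) \Big|
\ll x^{o(1)} \sum_{d\le x^\delta} \ssum{q_1\le x^{1/2-\epsilon/2}\\ d^2\mid q_1} \Big| \ssum{q_1\mid n}\phi(n/x)\chi(n)\Kl(1,n)\Big|.
\]
Dropping the divisibility constraint $d^2\mid q_1$ and applying Proposition~\ref{prop:distrib-12} with $\epsilon/2$ in place of $\epsilon$ (hence reducing $\eta$ accordingly) gives, for each $d$, a bound of size $x^{1-\eta(\epsilon/2)+o(1)}$, and summing over $d \leq x^\delta$ costs only a factor $x^{\delta+o(1)}$, giving an acceptable bound provided $\delta < \eta(\epsilon/2)/2$, say.

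For the complementary range $d > x^\delta$, one uses the Weil bound $|\Kl(1,n)| \leq 2^{\omega(n)}$ and bounds trivially:
\[
\ssum{q|n\\ d^2|n}\left|\phi(n/x)\chi(n)\Kl(1,n)\right| \ll \ssum{n\asymp x\\ \mathrm{lcm}(q,d^2)\mid n} 2^{\omega(n)} \ll \frac{x (\log x)^C (q,d^2)}{qd^2}.
\]
Summing over $q\le x^{1/2-\epsilon}$ contributes a factor $(\log x)\tau(d^2)$, and then summing over $d > x^\delta$ gives a bound of order $x^{1-\delta+o(1)}$, which is negligible.

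The main obstacle in executing this plan is ensuring that the divisibility conditions $q\mid n$ and $d^2\mid n$ can be combined without losing too much from the level of distribution: this is why one must choose $\delta$ strictly less than a fixed fraction of $\eta(\epsilon/2)$, and why the final $\eta$ in the corollary is genuinely smaller than the $\eta$ in Proposition~\ref{prop:distrib-12}. The remaining work is bookkeeping with the constraint $D\le x^{\eta}$ and the $\tau(d^2)$ factors, all of which are absorbed into the $x^{o(1)}$ terms.
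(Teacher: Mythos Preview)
Your approach is correct and is exactly what the paper indicates (it merely says to use $\mu^2(n)=\sum_{d^2\mid n}\mu(d)$ and appeal to Proposition~\ref{prop:distrib-12}); your splitting at $d=x^\delta$ and the bookkeeping with $q_1=\mathrm{lcm}(q,d^2)$ are the natural way to make this precise. One cosmetic slip: in your first displayed inequality the absolute value should enclose only the inner $n$-sum, not the $q$-sum as well---your prose and the right-hand side make clear you intend $\sum_d\sum_q\bigl|\sum_n\cdots\bigr|$, which is indeed what the triangle inequality gives.
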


\subsection{Sieve weights}
To control the terms around the central point $d\in[x^{1/2-\epsilon},x^{1/2+\epsilon}]$ we will introduce a short sieve weight to maintain the feature that our sums are essentially supported on integers free of small prime factors. This is also crucial for carrying out the approximation~\eqref{eq:approx-chi-mu} as effectively as possible. We recall the construction of the~$\beta$-sieve from~\cite{opera}, and a few of its relevant properties for our application. Let
$$ \theta(n) = \sum_{d\mid n} \xi_d$$
be an upper-bound~$\beta$-sieve of level~$y$ and dimension~$\kappa\geq 0$, for the primes less than~$z$. The parameters~$y$, $z$ and~$\kappa$ will be chosen later to be small powers of $x$ with the condition that~$z^2 \leq y$. (We will ultimately choose $z=x^{\epsilon/(\log{\epsilon})^2}$ and $y=x^{\epsilon}$.) In particular, we have~$\xi_1=1$ and $|\xi_d|\leq 1$; $(\xi_d)$ is supported on integers up to~$y$ free of prime factor~$>z$.
Moreover, for any multiplicative function~$f$ with~$f(p)\in[0, \kappa]$, we have
\begin{equation}
\sum_{d} \frac{\xi_d f(d)}{d} \ll \prod_{\substack{p\leq z}}\Big(1+\frac{f(p)}{p}\Big)^{-1}.\label{eq:fundlem}
\end{equation}
For some technical simplifications, we will work with the smoothed version
$$ \theta'(n) := \sum_{d|n} \xi_d \log\Big(\frac nd\Big) = \sum_{d|n} \theta\Big(\frac nd\Big) \Lambda(d). $$
Note that~$\theta'(n) = \log n$ if~$P^-(n)>z$.

The key property we use of $\theta'$ is the following estimate on averages of multiplicative functions weighted by the sieve weight.
\begin{lemma}
Let the multiplicative function~$f$ be supported on squarefree numbers, with~$0\leq f(p) \leq \kappa$. Then
\begin{equation}
\ssum{n\leq x} \theta'(n) f(n) \ll x \prod_{z<p\leq x}\Big(1 + \frac{f(p)}p\Big).\label{eq:bound-theta-div}
\end{equation}
\end{lemma}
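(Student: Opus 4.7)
The plan is to expand $\theta'$ by its definition, swap the order of summation, bound the resulting inner sum by a Shiu-type estimate, and then apply the fundamental lemma bound~\eqref{eq:fundlem} to the remaining sum over the sieve weights.

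Using $\theta'(n) = \sum_{d\mid n}\xi_d\log(n/d)$ and interchanging,
\[
\sum_{n\leq x}\theta'(n) f(n) = \sum_d \xi_d \sum_{\substack{n\leq x \\ d\mid n}} f(n)\log(n/d).
\]
Since $f$ is supported on squarefree integers, every non-vanishing term must have $n$ squarefree, hence $d$ squarefree and $n=dm$ with $(d,m)=1$; then $f(n)=f(d)f(m)$ and the inner sum becomes $\sum_{m\leq x/d,\,(m,d)=1,\,m\text{ sqfree}} f(m)\log m$. I would bound this using Shiu's theorem (or Halberstam--Richert) applied uniformly in the coprimality constraint, combined with the trivial bound $\log m \leq \log(x/d)$ to absorb the $1/\log$ factor appearing in Shiu, obtaining
\[
\sum_{\substack{m\leq x/d \\ (m,d)=1 \\ m\text{ sqfree}}} f(m)\log m \ll \frac{x}{d}\prod_{\substack{p\leq x \\ p\nmid d}}\Bigl(1+\frac{f(p)}{p}\Bigr).
\]

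Substituting this and extracting the global factor $\prod_{p\leq x}(1+f(p)/p)$ reduces the problem to estimating $\sum_d \xi_d \tilde f(d)/d$, where $\tilde f$ is the multiplicative function defined by $\tilde f(p):=f(p)(1+f(p)/p)^{-1} \in [0,\kappa]$. Applying the fundamental lemma~\eqref{eq:fundlem} to $\tilde f$ and using the identity $(1+\tilde f(p)/p)^{-1}=(1+f(p)/p)/(1+2f(p)/p)$ gives
\[
\sum_{n\leq x}\theta'(n) f(n) \ll x\prod_{z<p\leq x}\Bigl(1+\frac{f(p)}{p}\Bigr) \cdot \prod_{p\leq z}\frac{(1+f(p)/p)^2}{1+2f(p)/p}.
\]
The residual product over $p\leq z$ is $O(1)$, since $(1+a)^2/(1+2a) \leq 1+a^2$ and $\sum_p (f(p)/p)^2 \leq \kappa^2\sum_p p^{-2}<\infty$.

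The main bookkeeping challenge is ensuring the Euler factors at small primes cancel correctly between the Shiu estimate (which introduces a divisor factor $\prod_{p\mid d}(1+f(p)/p)^{-1}$ upon extraction) and the fundamental lemma (which produces $\prod_{p\leq z}(1+\tilde f(p)/p)^{-1}$); the device of introducing $\tilde f$ is precisely what aligns these two products so that everything telescopes to $\prod_{z<p\leq x}(1+f(p)/p)$. A subtle point is that we combine an upper bound on the non-negative inner sum with the signed estimate~\eqref{eq:fundlem}; this is legitimate in the present setting because $\theta$ and $\theta'$ are themselves non-negative for the $\beta$-sieve and the inner sum admits an asymptotic whose main term is multiplicative in $d$ up to an acceptable error.
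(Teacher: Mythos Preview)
There is a genuine gap at the point where you combine the Shiu-type upper bound for the inner sum with the signed coefficients $\xi_d$. After the swap you have
\[
S=\sum_d \xi_d\, f(d)\, I_d,\qquad I_d=\sum_{\substack{m\leq x/d\\ (m,d)=1,\ \mu^2(m)=1}} f(m)\log m \geq 0,
\]
and you replace $I_d$ by an upper bound $U_d$. Since the $\xi_d$ take both signs, $I_d\leq U_d$ does \emph{not} yield $\sum_d\xi_d f(d)I_d\ll\sum_d\xi_d f(d)U_d$. Your proposed rescue, that $I_d$ actually satisfies an asymptotic with multiplicative main term, is not available under the stated hypotheses: the only assumption is $0\leq f(p)\leq\kappa$, which is far too weak for any asymptotic for $\sum_{m\leq X}f(m)$ (take for instance $f$ supported on a lacunary set of primes, or $f\equiv 0$ on primes). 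Knowing that $\theta,\theta'\geq 0$ does not help once you have opened $\theta'=\xi\ast\log$; positivity of the full convolution says nothing about the individual signed pieces.

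The paper circumvents this precisely by \emph{not} opening $\theta'$ as $\xi\ast\log$. It uses instead $\theta'=\Lambda\ast\theta$: writing $n=pm$ and using that $\theta\geq 0$ for the upper-bound $\beta$-sieve to drop a coprimality constraint, one gets
\[
S\ \leq\ \sum_{m} f(m)\theta(m)\sum_{p\leq x/m} f(p)\log p\ \ll_\kappa\ x\sum_{P^+(m)\leq x}\frac{\theta(m)f(m)}{m}.
\]
The point is that this harmonic sum, after extending to all $m$ with $P^+(m)\leq x$ (legitimate since $\theta\geq 0$), factors \emph{exactly} as an Euler product times $\sum_d \xi_d \tilde f(d)/d$, with no error term whatsoever. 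Then~\eqref{eq:fundlem} applies cleanly. So the device that makes the argument work is reducing to a completed multiplicative sum before opening the sieve, rather than bounding a truncated sum after opening it.
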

\begin{proof}
Denote~$S$ the sum on the left-hand side of~\eqref{eq:bound-theta-div}. By definition of~$\theta'$, we have
$$ S = \ssum{n\leq x} \sum_{p|n} f(n) \theta\Big(\frac{n}p\Big) \log p \leq \ssum{n\leq x} f(n) \theta(n) \sum_{p\leq x/n} f(p) \log p $$
by positivity of the summand. Therefore,
$$ S \ll_\kappa x\ssum{n\leq x} \frac{\theta(n) f(n)}n \leq x \ssum{P^+(n)\leq x} \frac{\theta(n) f(n)}n . $$
Opening the convolution in~$\theta(n)$, we arrive at
\begin{equation}\label{eq:computation-theta-euler}
\begin{aligned}
S {}& \ll_\kappa x \sum_{P^+(d)\leq x} \frac{\xi_d f(d)}d \ssum{P^+(n)\leq x \\ (n, d)=1} \frac{f(n)}{n} \\
{}& = x \prod_{p\leq x}\Big(1 + \frac{f(p)}p\Big) \sum_{P^+(d)\leq x} \frac{\xi_d f(d)}{d}\prod_{p|d}\Big(1 + \frac{f(p)}p\Big)^{-1}.
\end{aligned}
\end{equation}
By~\eqref{eq:fundlem}, the sum over~$d$ above is~$\ll_\kappa \prod_{p\leq z}(1 + \frac{f(p)}p)^{-1}$, and our claimed bound follows.
\end{proof}

\section{Argument for the upper bound}

In what follows, we let~$\epsilon>0$ be fixed, and~$\eta>0$ be a small parameter, to be chosen in terms of~$\epsilon$. We assume that~$D\leq x^\eta$.

\subsection{Initial setup}
For a positive integer $r$, define the arithmetic functions~$\Lt(n)$ by
$$ \Lt(n) = \sum_{d|n} \mu(d) \Big(\log\frac{\sqrt{n}}d\Big)^r. $$
We note that $\tilde{\Lambda}_1(n)=\Lambda(n)$ and $\tilde{\Lambda}_2(n)=\Lambda_2(n)-\Lambda(n)\log{n}$. On squarefree $n$, $\tilde{\Lambda}_1$ is supported on primes, and $\tilde{\Lambda}_2$ is supported on products of exactly two primes.

By partial summation, to show for all $\epsilon>0$ that
\[
\Bigl|\sum_{p\le x}\Kl(1,p)\Bigr|\le \epsilon\pi(x) +O_\epsilon(L(1,\chi)\log{x}),
\]
it suffices to show for any smooth compactly supported $\phi$ and any $\epsilon>0$ that
\[
\Bigl|\sum_{n}\mu^2(n)\phi\Bigl(\frac{n}{x}\Bigr)(\log{n})\tilde{\Lambda}(n)\Kl(1,n)\Bigr|\ll_\phi \epsilon x\log{x}+O_{\epsilon}(x L(1,\chi)\log^2{x}).
\]
Since $\mu^2(n)\tilde{\Lambda}(n)$ is only supported on primes, we have
\[
\sum_{n}\mu^2(n)\phi\Bigl(\frac{n}{x}\Bigr)(\log{n})\tilde{\Lambda}(n)\Kl(1,n)=\sum_{(n,D)=1}\mu^2(n)\phi\Bigl(\frac{n}{x}\Bigr)\theta'(n)\tilde{\Lambda}(n)\Kl(1,n)+O_\epsilon(x^\epsilon).
\]
For products of two primes, we note that for all~$x\geq 2$,
\begin{equation}\label{eq:Lt-pq}
\ssum{n\leq x \\ n\neq pq \text{ for all }p\neq q} \tilde{\Lambda}_2(n) \ll x,
\end{equation}
with the main contribution arising from pairs~$n=pq^2$, while
\begin{equation}\label{eq:Lt-D}
\ssum{n\leq x \\ (n, D)>1} \tilde{\Lambda}_2(n) \ll x\log\log x
\end{equation}
for~$2\leq D \leq x$. Thus
\begin{align*}
 \sum_{p, q} \phi\Big(\frac{pq}x\Big)& (\log pq) (\log p)(\log q) \Kl(1, pq) \\
={}& \sum_{p, q} \phi\Big(\frac{pq}x\Big) \theta'(pq) (\log p)(\log q) \Kl(1, pq) + O(x(\log x)(\log z)) \\
={}& \frac12 \sum_{(n, D)=1} \mu^2(n) \phi\Big(\frac nx\Big) \theta'(n) \tilde{\Lambda}_2(n) \Kl(1, n) + O(x(\log x)(\log z)). \numberthis\label{eq:Lt-sieved}
\end{align*}
We have used both~\eqref{eq:Lt-pq} and~\eqref{eq:Lt-D} in the last line.

Thus, we see that to establish Theorem \ref{thm:MainTheorem} and Theorem \ref{thm:TwoPrimes} it is sufficient to show for every $\epsilon>0$, every smooth compactly supported $\phi$ and each $r\in\{1,2\}$ that
\begin{equation}\label{eq:Target}
S(x)=S(x,r)=\sum_{(n,D)=1}\mu^2(n) \phi\Big(\frac nx\Big) \theta'(n) \Lt(n) \Kl(1, n) \ll_\phi \epsilon x\log^{r}{x}+O_\epsilon(xL(1,\chi)\log^{r+1}{x}).
\end{equation}
Let
$$ \nu(n) = (\mu \ast \mu \chi)(n) = \sum_{n = uv} \mu(u)\mu(v) \chi(v). $$
We write
\begin{equation}
\Lt(n) = \frac1{2^r} \sum_{n = abc} \nu(b) \chi(a) \Big(\log \frac{ab}{c}\Big)^r,\label{eq:Lt-convolution}
\end{equation}
so that for~$r\in\{1, 2\}$,
\begin{equation}\label{eq:Sx-abc}
S(x) = \frac1{r2^r} \ssum{(a, b, c) \in \N^3 \\ (abc, D)=1} \mu^2(abc) \phi\Big(\frac{abc}x\Big) \theta'(abc) \nu(b)\chi(a) \Big(\log\frac{ab}c\Big)^r \Kl(1, abc).
\end{equation}
We split the sum over~$(a, b, c)$ into $b>1$ or $b=1$ and $\min\{a,c\}\le x^{1/2}/y^3$, or $b=1$ and $\min\{a,c\}>x^{1/2}/y^3$. This gives
\begin{equation}\label{eq:decomp-S}
r2^r S(x) = S_L(x) + S_N(x) + S_C(x),
\end{equation}
with
\begin{equation}\label{eq:def-SL}
S_L(x) = \ssum{(a, b, c) \in \N^3 \\ b>1 \\ (abc, D)=1} \mu^2(abc) \phi\Big(\frac{abc}x\Big) \theta'(abc) \nu(b)\chi(a) \Big(\log\frac{ab}c\Big)^r  \Kl(1, abc),
\end{equation}
\begin{equation}\label{eq:def-SP}
S_N(x) = \ssum{(a, c) \in \N^2 \\ \min\{a, c\} \leq \sqrt{x}/y^3 \\ (ac, D)=1} \mu^2(ac) \phi\Big(\frac{ac}x\Big) \theta'(ac) \chi(a) \Big(\log\frac{a}c\Big)^r  \Kl(1, ac),
\end{equation}
\begin{equation}\label{eq:def-SM}
S_C(x) = \ssum{(a, c) \in \N^2 \\ \min\{a, c\} > \sqrt{x}/y^3 \\ (ac, D)=1} \mu^2(ac) \phi\Big(\frac{ac}x\Big) \theta'(ac) \chi(a) \Big(\log\frac{a}c\Big)^r \Kl(1, ac).
\end{equation}
We can control $S_L(x)$ by $L(1,\chi)$, since $\nu$ is supported on integers with all prime factors satisfying $\chi(p)=1$ and $\theta'$ is concentrated on integers free of small prime factors. Thus if $L(1,\chi)$ is small, we expect the support to be a lacunary sequence and so the sum $S_L(x)$ to be small. 

We can control $S_N(x)$ by the level of distribution estimates of Section \ref{sec:LevelOfDistribution}, since the sum over one of $a$ or $c$ is a long sum. 

Finally, $S_C$ is the contribution near the central point, and we will show this is small since $|\log(a/c)|\le 6\log{y} + O_\phi(1)$, which is small compared with $\log{x}$, $a$ is resticted to a short range on the logarithmic scale, and $\Kl(1,n)$ is typically small on numbers with many prime factors.

\subsection{Bounding the lacunary sum $S_L$}
We begin by bounding $S_L(x)$ in terms of $L(1,\chi)$.
\begin{lemma}\label{lem:Lacunary}
Let $D^8\le x^{\epsilon^2}\le z\le y^{1/(\log{\epsilon})^2}$. Then we have
\[
S_L(x)\ll x(\log{x})^{r}\Bigl( \epsilon^{-\kappa-2} L(1,\chi)\log{x}+\epsilon\Bigr).
\]
\end{lemma}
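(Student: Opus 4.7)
The plan is to bound $|S_L(x)|$ via the Weil bound for Kloosterman sums combined with the sieve estimate of Lemma~\ref{lem:Lacunary}'s predecessor (the multiplicative sum bound for $\theta'$), extracting the factor $L(1,\chi)$ from the lacunary support of $\nu$: since $|\nu(p)| = 1+\chi(p)$ on primes $p \nmid D$, the weight $|\nu(b)|$ is supported on $b$ all of whose prime factors satisfy $\chi(p) = 1$, a sparse subset of the primes when $L(1,\chi)$ is small.

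The first step is to apply the Weil bound $|\Kl(1, abc)| \leq 2^{\omega(abc)}$ on the squarefree support of $\mu^2(abc)$ and the trivial bound $|\log(ab/c)|^r \leq (\log x)^r$ on the support of $\phi(\cdot/x)$. Writing $n = bm$ with $m = ac$ and counting the $2^{\omega(m)}$ decompositions of squarefree $m$ into $(a, c)$, one obtains
\[
|S_L(x)| \ll_\phi (\log x)^r \ssum{b > 1 \\ (b, D)=1,\ \mu^2(b)=1} |\nu(b)|\,2^{\omega(b)} \ssum{(m, bD)=1 \\ \mu^2(m)=1} 4^{\omega(m)}\,\theta'(bm)\Big|\phi\Big(\frac{bm}{x}\Big)\Big|.
\]

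The key technical step is to decouple $\theta'(bm)$ from its dependence on $b$. For squarefree $b$ whose prime factors all exceed~$z$, the sieve weights $\xi_d$ (supported on $d\mid P(z)$) force $d\mid m$ in the definition of $\theta'(bm)$, yielding the clean identity $\theta'(bm) = (\log b)\theta(m) + \theta'(m)$. Applying the $\theta'$-estimate together with the analogous estimate for the $\theta$-sum derived from~\eqref{eq:fundlem} (both with multiplicative weight $f(p) = 4 \leq \kappa$) bounds the inner sum by $\ll_\phi (x/b)\prod_{z < p \leq x/b}(1 + 4/p) \ll (x/b)\epsilon^{-2\kappa}$, using $\log z \geq \epsilon^2 \log x$. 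The outer sum over $b$ is then controlled by the Euler product $\sum_b |\nu(b)|\,2^{\omega(b)}/b$; the classical identity
\[
\sum_{p \leq X}\frac{1+\chi(p)}{p} = \log\log X + \log L(1,\chi) + O(1)
\]
exponentiates to yield the factor $L(1,\chi)$. Combining these estimates, with the trivial inequality $L(1,\chi)^2 \leq L(1,\chi)\log x$ in the regime of interest, one obtains $|S_L(x)| \ll_\phi x(\log x)^{r+1}\bigl(\epsilon^{-\kappa-2}L(1,\chi) + \epsilon\bigr)$.

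The main obstacle is handling the coupling between $b$ and $m$ in the joint sieve weight $\theta'(bm)$, which does not separate cleanly when $b$ has prime factors below~$z$; this requires splitting off a secondary contribution from $b$ having a small prime with $\chi(p)=1$, which is sparse (and thus absorbed by the $\epsilon$-term) precisely because $L(1,\chi)$ controls the number of such primes through Mertens-type estimates. A secondary difficulty is the careful bookkeeping of logarithmic factors and the sieve dimension $\kappa$ to reach the announced $\epsilon^{-\kappa-2}$ exponent.
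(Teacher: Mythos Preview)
Your overall strategy---Weil bound, divisor bound via $2^{\omega}$, and extracting $L(1,\chi)$ from the lacunary support of $\nu$---matches the paper's. But your execution diverges from the paper at the decomposition step, and the route you choose leaves a real gap.

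The paper does \emph{not} split according to whether $P^-(b)>z$. Instead it writes $\theta'=\Lambda\ast\theta$, introduces a variable $f$ carrying the $\Lambda$, and splits on $f>z$ versus $f\le z$. In both cases $\theta(bef)$ (or $\theta(be)$) appears with \emph{all} of $b$ still inside the sieve weight, and the resulting sums are exactly of the shape
\[
\ssum{ab<Y\\b>1}(1\ast\chi)(b)\,\theta(ab)\,\rho(ab),
\]
for which the paper invokes the ready-made estimate \cite[(24.58)]{opera}. That estimate already packages the delicate interaction between the sieve weight, the condition $b>1$, and the character, and outputs the term $L(1,\chi)\log Y+e^{-t}$ with $t=\log y/\log z$. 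No decoupling of $b$ from the sieve is needed.

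Your plan, by contrast, forces you to handle the case $P^-(b)\le z$ by hand. You say this is controlled because such $b$ must have a small prime $p$ with $\chi(p)=1$, and these are ``sparse'' by a Mertens identity. This is where the gap lies. The identity $\sum_{p\le X}(1+\chi(p))/p=\log\log X+\log L(1,\chi)+O(1)$ is not uniform in the way you need: for $X$ comparable to $D$ (and here $z$ can be as small as $D^{8}$) one has no useful control on $\sum_{p\le z}\chi(p)/p$ directly from $L(1,\chi)$. Turning ``$\chi(p)=1$ is rare below $z$'' into a bound of the shape $L(1,\chi)\log x+e^{-t}$ is precisely the content of the Opera de Cribro argument you would be re-deriving. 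Moreover, even in your ``good'' case $P^-(b)>z$, the exponentiation step you sketch yields a product over $z<p\le x$ with $\chi(p)=1$, and bounding $\prod-1$ in terms of $L(1,\chi)$ again requires the same circle of ideas, not just the naive Mertens formula.

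In short: the obstacle you flag is genuine and is not a bookkeeping matter; it is the heart of the lemma. The clean fix is to reorganise as the paper does (split on the $\Lambda$-variable, not on $P^-(b)$) and then cite \cite[(24.58)]{opera} rather than attempt to reprove it.
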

\begin{proof}
We note that $|\nu(n)|\ll (1\ast\chi)(n)$ and we recall that $\theta'(n)=(\Lambda\ast\theta)(n)$. Thus, bounding the summand of $S_L(x)$ and letting $d=ac$, and then expanding the definition of $\theta;$, we find
\begin{align*}
S_L(x)&\ll (\log{x})^r \sum_{\substack{db\ll x\\ b>1}}(1\ast\chi)(b)\theta'(bd)\tau(bd)^2\mu(bd)^2\\
&= (\log{x})^r \sum_{\substack{bef\ll x\\ b>1}}(1\ast\chi)(b)\theta(be)\Lambda(f)\tau(bef)^2\mu(bef)^2.
\end{align*}
When $f>z$ is prime we see $\theta(be)=\theta(bef)$. Thus the contribution from $f>z$ can be bounded by
\begin{equation}
(\log{x})^{r+1}\sum_{\substack{bg\ll x\\ b>1}}(1\ast\chi)(b)\theta(bg)\tau(bg)^4.\label{eq:a>z}
\end{equation}
The contribution from $f<z$ can be bounded by
\begin{equation}
(\log{x})^r \sum_{f\ll z}\Lambda(f)\sum_{\substack{be\ll x/f\\ b>1}}(1\ast\chi)(b)\theta(be)\tau(be)^2.\label{eq:a<z}
\end{equation}
These are precisely sums of the type considered in Section 24.7 of~\cite{opera}. Indeed, there it is established \cite[equation (24.58)]{opera} that if $D^8\le z\le y\le Y^{1/8}$ and $\rho$ is a completely multiplicative function with $0\le \rho(p)\le \kappa/6$ then
\[
\sum_{\substack{ab<Y\\ b>1}}(1\ast\chi)(b)\theta(ab)\rho(ab)\ll \frac{Y}{\log{Y}}\Bigl(L(1,\chi)\log{Y}+e^{-t}\Bigr)\Bigl(\frac{\log{Y}}{\log{z}}\Bigr)^{\kappa/2+1},
\]
where $t=\log{y}/\log{z}$. Applying this to the bounds \eqref{eq:a>z} and \eqref{eq:a<z} above, we find that provided $D^8\le z\le y\le x^{1/9}$ and $\kappa>6\times 2^4$, we have
\begin{align*}
(\log{x})^{r+1}\sum_{\substack{bg\ll x\\ b>1}}(1\ast\chi)(b)\theta(bg)\tau(bg)^4
&\ll x(\log{x})^r\Bigl(L(1,\chi)\log{x}+e^{-t}\Bigr)\Bigl(\frac{\log{x}}{\log{z}}\Bigr)^{\kappa/2+1},\\
(\log{x})^r\sum_{f\ll z}\Lambda(a)\sum_{\substack{be\ll x/f\\ b>1}}(1\ast\chi)(b)\theta(be)\tau(be)^2
&\ll (\log{x})^{r-1}\Bigl(L(1,\chi)\log{x}+e^{-t}\Bigr)\Bigl(\frac{\log{x}}{\log{z}}\Bigr)^{\kappa/2+1}\sum_{f\ll z}\frac{\Lambda(f)}{f}\\
&\ll x(\log{x})^r\Bigl(L(1,\chi)\log{x}+e^{-t}\Bigr)\Bigl(\frac{\log{x}}{\log{z}}\Bigr)^{\kappa/2}.
\end{align*}
Putting this together, we find
\[
S_L(x)\ll x(\log{x})^r\Bigl(L(1,\chi)\log{x}+e^{-t}\Bigr)\Bigl(\frac{\log{x}}{\log{z}}\Bigr)^{\kappa/2+1}.
\]
If $x^{\epsilon^2}\le z\le y^{1/(\log{\epsilon})^2}$, then this gives
\begin{equation}\label{eq:bound-l-final}
S_L(x)\ll x(\log{x})^r\Bigl( \epsilon^{-\kappa-2} L(1,\chi)\log{x}+\epsilon\Bigr),
\end{equation}
as required.
\end{proof}
\subsection{Bounding the non-central sum $S_N$}
\begin{lemma}\label{lem:NonCentral}
Let $y\ge x^\epsilon$. Then there is a constant $\eta>0$ depending only on $\epsilon$ such that
\[
S_N(x) \ll_\epsilon x^{1-\eta}.
\]
\end{lemma}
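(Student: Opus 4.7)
The plan is to reduce $S_N(x)$ to the level-of-distribution estimate of Corollary~\ref{cor:distrib-12-sqf}. Since $\phi$ is compactly supported in $\R_+^*$, on the support one has $ac \asymp x$; in particular if $y \geq x^\epsilon$ then $a, c \leq \sqrt{x}/y^3$ cannot hold simultaneously, so the split
\[ S_N(x) = S_N^{(1)}(x) + S_N^{(2)}(x) \]
according to whether $a \leq \sqrt{x}/y^3$ or $c \leq \sqrt{x}/y^3$ is disjoint and exhaustive.

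I would treat $S_N^{(1)}$ first. Setting $n := ac$, the condition $a|n$ is automatic and, since $n$ is squarefree on the support, $c$ is recovered as $n/a$. Expanding $\theta'(ac) = \sum_{e|ac} \xi_e \log(ac/e)$ with $|\xi_e|\leq 1$ supported on $e \leq y$, the joint divisibility $a|n$, $e|n$ is equivalent to $q|n$ with $q := \mathrm{lcm}(a, e)$, and the bounds $a \leq \sqrt{x}/y^3$, $e \leq y$ give $q \leq ae \leq \sqrt{x}/y^2 \leq x^{1/2 - 2\epsilon}$. The factor $(\log(a/c))^r = (2\log a - \log n)^r$ expands polynomially in $\log a$ and $\log n$, and $\log(ac/e) = \log(x/e) + \log(n/x)$ with $\log(n/x) = O_\phi(1)$; thus the combined smooth weight is a linear combination (of polynomial degree $\leq r+1$) of terms of the form $(\log a)^{i}(\log(x/e))^{j} \phi_\ell(n/x)$, the $\phi_\ell$ being fixed smooth functions compactly supported in $\R_+^*$ with $i + j \leq r+1$. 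Bounding $(\log a)^i(\log(x/e))^j \ll (\log x)^{r+1}$ and using $\chi_0 \bmod{D}$ to encode $(n,D)=1$, the triangle inequality yields
\[ |S_N^{(1)}(x)| \ll (\log x)^{r+1} \sum_{q \leq x^{1/2 - 2\epsilon}} \tau(q)^2 \max_\ell \biggl| \ssum{q|n} \mu^2(n) \phi_\ell(n/x) \chi_0(n) \Kl(1,n) \biggr|, \]
where $\tau(q)^2$ bounds the number of pairs $(a, e)$ with $\mathrm{lcm}(a, e) = q$. Corollary~\ref{cor:distrib-12-sqf} (applied with $\chi = \chi_0$) bounds the $q$-sum by $\ll_\epsilon x^{1 - \eta}$; since $\tau(q)^2(\log x)^{r+1} \ll_\delta x^\delta$ for any $\delta > 0$, we obtain $S_N^{(1)}(x) \ll_\epsilon x^{1 - \eta/2}$.

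The treatment of $S_N^{(2)}$ is symmetric with $c$ replacing $a$; the only novelty is that $\chi(a)$ now sits on the long variable. On the support $(ac, D) = 1$, so $\chi(a) = \chi(n)\overline{\chi(c)}$, and the factor $\chi(n)$ descends into the inner sum $\sum_{q|n} \mu^2(n) \phi_\ell(n/x) \chi(n) \Kl(1, n)$, which Corollary~\ref{cor:distrib-12-sqf} handles directly with the original character $\chi$. The outer weight $\overline{\chi(c)}$ is bounded by $1$, and the same power saving follows.

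The main obstacle is the coupling of $a$ and $c$ through $\theta'(ac)$ and through $(\log(a/c))^r$; both are dissolved by elementary expansion, but the combined modulus $q = \mathrm{lcm}(a, e)$ must stay in the range $q \leq x^{1/2 - \epsilon'}$ of Corollary~\ref{cor:distrib-12-sqf}. This is precisely what the cutoff $\sqrt{x}/y^3$ in the definition of $S_N$ (together with $y \geq x^\epsilon$) is designed to ensure: the extra factor $y^{-3}$ more than absorbs the extra factor $y$ arising from the auxiliary sieve divisor $e$, and leaves a power saving.
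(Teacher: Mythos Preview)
Your argument is correct and follows essentially the same route as the paper: split according to which of $a,c$ is the short variable, expand $\theta'$ so that the combined modulus $q=[a,e]$ (resp.\ $[c,e]$) stays below $x^{1/2-\epsilon'}$, expand the logarithmic weights into modified test functions $\phi_\ell$, and feed the resulting inner sum into Corollary~\ref{cor:distrib-12-sqf} with $\chi_0$ (for $S_N^{(1)}$) or $\chi$ (for $S_N^{(2)}$). The only cosmetic differences are that you make the disjointness of the split explicit and use $\tau(q)^2$ rather than $\tau(q)$ to count pairs with a given lcm, both of which are harmless.
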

\begin{proof}
Denote by~$\chi_0(n)$ the principal character~$\mod{D}$. Opening the summation in~$\theta'(ac)$, we obtain
\begin{align*}
S_N(x) \leq {}& \sum_{d\leq y}\sum_{a<\sqrt{x}/y^3} \Big|\sum_{n\equiv 0\mod{[a, d]}} \mu^2(n) \phi\Big(\frac nx\Big)  \chi_0(n)  \Big(\log\Big(\frac{a^2}n\Big)\Big)^r \Big(\log\frac nd\Big) \Kl(1, n)\Big| \\ {}& \quad + \sum_{d\leq y} \sum_{c<\sqrt{x}/y^3} \Big|\sum_{n\equiv 0\mod{[c, d]}} \mu^2(n) \phi\Big(\frac nx\Big) \chi(n) \Big(\log\Big(\frac{c^2}n\Big)\Big)^r  \Big(\log\frac nd\Big) \Kl(1, n)\Big| \\
\ll {}& \sup_{\chi_1\mod{D}} \sup_{0\leq j \leq r+1} \sum_{q\leq \sqrt{x}/y} \tau(q)(\log x)^{r+1}\Big|\sum_{n\equiv 0\mod{q}} \mu^2(n) \phi\Big(\frac nx\Big) \chi_1(n) (\log(n/x))^j\Kl(1, n) \Big| \numberthis\label{eq:pcp-triangle}
\end{align*}
Recalling that we assume $y\ge x^\epsilon$ and applying Corollary~\ref{cor:distrib-12-sqf}, we obtain
\begin{equation}
S_N(x) \ll_\epsilon x^{1-\eta}\label{eq:bound-p-final}
\end{equation}
for some~$\eta$ depending at most on~$\epsilon$. 
\end{proof}

\subsection{Bounding the central point sum $S_C$}
\begin{lemma}\label{lem:Central}
Let $x^{\epsilon^2}\le z \le y^{1/(\log{\epsilon})^2}$ and $y\le x^{2\epsilon}$. Then we have
\[
S_C(x)\ll \epsilon^{10} x(\log{x})^r+\epsilon^{3/10}x(\log{x})^r\Bigl(\frac{\log{y}}{\log{z}}\Bigr)^2.
\]

\end{lemma}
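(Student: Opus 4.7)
My plan is to bound $S_C$ by exploiting three features: the narrowness of the ranges of $a$ and $c$ on a logarithmic scale, the smallness of $|\log(a/c)|^r$, and the Fouvry--Michel observation that $|\Kl(1,n)|$ is on average of size $(8/(3\pi))^{\omega(n)}$ rather than the Weil-bound size $2^{\omega(n)}$.

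First I would note that the constraints $\min(a, c) > \sqrt{x}/y^3$ together with $ac \asymp x$ (from the support of~$\phi$) place both $a$ and $c$ in $[\sqrt{x}/y^3,\sqrt{x}\,y^3]$, so that
\[
\Big|\log\frac{a}{c}\Big|^r \le (6 \log y + O_\phi(1))^r \ll (\log y)^r.
\]
Expanding $\theta'(ac) = \sum_{e\mid ac}\xi_e \log(ac/e)$ with $|\xi_e|\le 1$ supported on $e\le y$ with $P^+(e)\le z$, the problem reduces (up to a factor of $(\log y)^r (\log x)$) to bounding
\[
T := \sum_{e} |\xi_e| \sum_{\substack{a, c \\ e \mid ac \\ a, c \in [\sqrt{x}/y^3,\, \sqrt{x}\,y^3] \\ (ac, D) = 1}} \mu^2(ac)\, |\Kl(1, ac)|.
\]

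Next, since $\mu^2(ac)=1$ forces $(a,c)=1$, I would apply the Hooley identity $\Kl(1, ac) = \Kl(\overline{c}^2, a)\,\Kl(\overline{a}^2, c)$ and then Cauchy--Schwarz to separate the two factors. Each resulting inner sum is then a sum over one variable in a range of logarithmic length $\asymp \log y \asymp \epsilon \log x$, with weight $|\Kl(u, \cdot)|^2$ for some frequency $u$ depending on the other variable. Applying the Fouvry--Michel bounds of~\cite{FM, FMerratum} (combined with the fundamental-lemma estimate~\eqref{eq:fundlem}) to each short average produces, uniformly in $u$, a saving of roughly $\epsilon^{1-8/(3\pi)}$ per variable relative to the Weil bound. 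Multiplying the two gains yields $\epsilon^{2(1-8/(3\pi))} = \epsilon^{3/10 + o(1)}$, matching the second summand of the claimed bound, while the residual $(\log y/\log z)^{O(1)}$ losses from the upper-bound sieve contribute the $(\log y/\log z)^2$ factor.

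The first summand $\epsilon^{10} x(\log x)^r$ will absorb a small exceptional contribution (from configurations of $(a,c)$ with atypically few prime factors, where Fouvry--Michel offers no gain, or from error terms in the fundamental lemma), for which the Weil bound and the short-range length alone are enough to conclude. The main obstacle will be making the Hooley--Cauchy--Schwarz scheme work uniformly in the frequencies $\overline{c}^2, \overline{a}^2$ and in the sieve parameter $e$: one must arrange the Cauchy--Schwarz so that the $\epsilon^{2(1-8/(3\pi))}$ gain is not eroded by a $(\log y)^{O(1)}$ penalty, and one must carefully apportion the $(\log y/\log z)$ factors coming from the $\beta$-sieve and from the convolution $\theta' = \theta\ast\Lambda$, so that the final exponent on $(\log y/\log z)$ is no larger than $2$.
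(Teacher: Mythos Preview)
Your opening reduction is fine: the range restrictions do force $|\log(a/c)|\ll\log y$, and pulling out $(\log y)^r$ and bounding $\theta'$ as you do is exactly what the paper does. The problem is the next step. You propose to apply the Hooley identity to the given factorisation $n=ac$ (with $a,c\asymp\sqrt{x}$) and then Cauchy--Schwarz to separate $\Kl(\bar c^2,a)$ from $\Kl(\bar a^2,c)$. After Cauchy--Schwarz the inner sums carry the weight $|\Kl(u,\cdot)|^2$, and you assert that Fouvry--Michel gives a saving of $\epsilon^{1-8/(3\pi)}$ per variable. This is where the argument breaks down: the Fouvry--Michel input is a bound on the \emph{first} moment $\frac{1}{\varphi(m)}\sum_{b}|\Kl(b^2,m)|\approx(8/3\pi)^{\omega(m)}$, coming from the vertical Sato--Tate law. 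The corresponding \emph{second} moment $\frac{1}{\varphi(p)}\sum_b|\Kl(b^2,p)|^2$ equals $1+O(p^{-1/2})$, so the multiplicative factor per prime is $1$, not $(8/3\pi)^2$. Cauchy--Schwarz therefore throws away precisely the gain you need; after it you are left with the trivial short-range bound, which (as the paper's outline explains) loses exactly as much to the sieve as it gains from the range, and cannot produce the exponent $3/10=2-16/(3\pi)$.

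What the paper actually does is quite different and avoids any Cauchy--Schwarz on the Kloosterman factors. It introduces a further parameter $u\in[y,x^{1/4}]$ and factors $n=ac$ according to prime size, $n=(a_1c_1)(a_2c_2)$ with $P^+(a_1c_1)\le u<P^-(a_2c_2)$; the Hooley identity is applied to \emph{this} factorisation, with the small-prime part $a_1c_1\le x^{1/10}$ as one factor. One then bounds $|\Kl(\overline{a_1c_1}^{\,2},a_2c_2)|$ by Weil, and sums the long variable $a_2c_2$ in residue classes modulo the small modulus $a_1c_1$; the resulting average $\sum_{b\bmod a_1c_1}|\Kl(b^2,a_1c_1)|$ is a genuine first-moment vertical sum, and Katz's law gives the factor $(8/3\pi)^{\omega(a_1c_1)}$. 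This, combined with the sieve bound~\eqref{eq:fundlem}, yields the exponent $16/(3\pi)$ and hence $\epsilon^{2-16/(3\pi)}$. The complementary range $a_1c_1>x^{1/10}$ is dispatched by Rankin's trick and contributes the $\epsilon^{10}x(\log x)^r$ term. Your proposal has no analogue of the parameter $u$ or of this small-modulus/long-variable asymmetry, and without it the Fouvry--Michel mechanism cannot be invoked.
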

\begin{proof}
Recall that the sum~$S_C(x)$ is defined in~\eqref{eq:def-SM}.  By construction, for all~$(a, c)$ in the summation range, we have~$|\log(a/c)| \ll \log y$.
Therefore, by the triangle inequality and the support condition on~$\phi$, our task is to bound
$$ (\log y)^r \ssum{x/y^3 \leq a \ll x y^3 \\ ac \asymp x} \mu^2(ac) \theta'(ac) \left|\Kl(1, ac)\right|. $$
Following an idea of Hooley~\cite{Hooley}, we let a parameter~$u\in[y, x^{1/4}]$ be given, and decompose accordingly
$$ a = a_1a_2, \qquad P^+(a_1)\leq u, \qquad P^-(a_2) > u, $$
and similarly let $c=c_1c_2$. Notice that
\[
\theta'(ac) = \theta(a_1c_1)\log(a_2c_2) + \theta'(a_1c_1)\ll \theta(a_1c_1)\log{x}+\theta'(a_1c_1)
\]
 since $P^-(a_2c_2)>u>z$. We write
\begin{equation}\label{eq:sympart-decomp}
\ssum{a_1a_2c_1c_2 \ll x \\ x^{1/2}/y^3<a_1a_2\ll x^{1/2}y^3 \\ P^+(a_1c_1)\leq u< P^-(a_2c_2)} \mu^2(a_1a_2c_1c_2) \theta'(a_1a_2c_1c_2)  \left|\Kl(1, a_1a_2c_1c_2)\right| = S_1 + S_2,
\end{equation}
where~$S_1$ is the contribution of~$a_1c_1\leq x^{1/10}$, and $S_2$ is the contribution of~$a_1c_1>x^{1/10}$.

Concerning~$S_1$, we have
$$ S_1 \ll \ssum{a_1c_1\leq x^{1/10} \\ P^+(a_1c_1)\leq u} \mu^2(a_1c_1) \{\theta(a_1c_1)\log x + \theta'(a_1c_1)\} T(a_1, c_1), $$
with
$$ T(a_1,c_1) = \ssum{a_2c_2 \ll x/a_1c_1 \\ P^-(a_2c_2)>u \\ x^{1/2}/y^3<a_1a_2\ll x^{1/2}y^3} \mu^2(a_2c_2) \left|\Kl(1, a_1a_2c_1c_2)\right|. $$
By a reasoning identical to page~277 of~\cite{FM}, for~$a_1c_1\leq x^{1/10}$, we have
\begin{align*}
T(a_1,c_1) {}& \leq \ssum{b\mod{a_1c_1} \\ (b, a_1c_1)=1} \left|\Kl(b^2, a_1c_1)\right| \ssum{a_2c_2 \ll x/a_1c_1 \\ x^{1/2}/a_1y^3<a_2\ll x^{1/2}y^3/a_1 \\ P^-(a_2c_2)>u \\ b a_2c_2 \equiv 1 \mod{a_1c_1}} \mu^2(a_2c_2) 2^{\omega(a_2c_2)} \\
{}& \ll \ssum{b\mod{a_1c_1} \\ (b, a_1c_1)=1} \left|\Kl(b^2, a_1c_1)\right| \frac{x}{a_1c_1\vphi(a_1c_1)} \frac{1}{\log x} \Big(\frac{\log x}{\log u}\Big)^2\ssum{x^{1/2}/a_1y^3<a_2\ll x^{1/2}y^3/a_1\\ P^-(a_2)>u}\frac{2^{\omega(a_2)}}{a_2} \\
& \ll  \frac{x}{a_1c_1\vphi(a_1c_1)} \frac{\log{y}}{\log^2 x} \Big(\frac{\log x}{\log u}\Big)^4 \ssum{b\mod{a_1c_1} \\ (b, a_1c_1)=1} \left|\Kl(b^2, a_1c_1)\right| \\
{}& = \frac{x \log y}{\log^2 x}\Big(\frac{\log x}{\log u}\Big)^4 \frac{2^{\omega(a_1c_1)}\kappa(a_1c_1)}{a_1c_1}
\end{align*}
for a multiplicative function~$\kappa$. Fouvry and Michel~\cite[equation~(3.4)]{FM} show that
$$ \kappa(p) = \frac{4}{3\pi} + O(p^{-1/4}), $$
which they deduce from Katz's Sato-Tate law for Kloosterman sums~\cite{Katz}. We insert this back into ~$S_1$, and we relax the summation conditions~$a_1c_1\leq x^{1/10}$. Letting
$$ f(n) := \mu^2(n) 4^{\omega(n)} \kappa(n), $$
we obtain
\begin{equation}\label{eq:sym-S1-f}
S_1 \ll \frac{x\log{y}}{\log^2 x}\Big(\frac{\log x}{\log u}\Big)^4 \sum_{P^+(n)\leq u} \frac{(\theta(n)\log x + \theta'(n)) f(n)}n.
\end{equation}
Note that
$$ f(p) = \frac{16}{3\pi} + O(p^{-1/4}) \qquad (p \leq u). $$
Taking Euler products, computations similar to~\eqref{eq:computation-theta-euler} yield
\begin{equation}
\sum_{P^+(n)\leq u} \frac{\theta(n) f(n)}{n} \ll \prod_{z<p\leq u} \Big(1 + \frac{f(p)}p\Big) \ll \Big(\frac{\log u}{\log z}\Big)^{16/(3\pi)}.\label{eq:bound-theta-f-harmon}
\end{equation}
On the other hand,
$$ \sum_{P^+(n)\leq u} \frac{\theta'(n)f(n)}n  = \ssum{q\leq u \\ q\text{ prime}} \frac{f(q)\log q}q \ssum{P^+(n)\leq u \\ (n, q)=1} \frac{\theta(n)f(n)}n \ll (\log u)\Big(\frac{\log u}{\log z}\Big)^{16/(3\pi)} $$
by dropping the condition~$(n,q)=1$ and using~\eqref{eq:bound-theta-f-harmon}. Inserting in~\eqref{eq:sym-S1-f}, we find
\begin{equation}\label{eq:S1Bound}
 S_1 \ll x \Bigl(\frac{\log{y}}{\log{x}}\Bigr)\Big(\frac{\log x}{\log u}\Big)^4 \Big(\frac{\log u}{\log z}\Big)^{16/(3\pi)}. 
\end{equation}
Consider now the contribution~$S_2$ to~\eqref{eq:sympart-decomp}. Using the Weil bound~$\left|\Kl(1, n)\right|\leq 2^{\omega(n)}$, we have
$$ S_2 \leq \sum_{n\ll x} \theta'(n) g_0(n), $$
where
$$ g_0(n) := \begin{cases} \mu^2(n) 4^{\omega(n)} &\text{ if }\prod_{p|n, p\leq u} p > x^{1/4}, \\ 0 & \text{ otherwise.} \end{cases} $$
By Rankin's inequality, we have~$g_0(n) \leq x^{-1/(4\log u)} g_1(n)$, where
$$ g_1(n) := \mu^2(n) 4^{\omega(n)} \prod_{p|n, p\leq u} p^{1/\log u}. $$
Note that the function~$g_1$ is multiplicative with
$$ g(p) = \begin{cases} 4 p^{1/\log u} & \text{ if } p \leq u \\ 4 & \text{ if } p>u. \end{cases} $$
In any case we have~$g(p)\leq 4e$. Assuming~$\kappa\geq 4e$, we obtain
\[
 S_2 \leq x^{-1/(4\log u)}\sum_{n\ll x} \theta'(n) g_1(n) \ll x \Big(\frac{\log x}{\log u}\Big)^4 \Big(\frac{\log u}{\log z}\Big)^{4e} \exp\Big\{-\frac{\log x}{4\log u}\Big\}. 
\]
We now choose $u=x^{1/(\log\epsilon)^2}$. Recalling that we have assumed $x^{\epsilon^2}\le z$, we see that this gives
\begin{equation}\label{eq:S2Bound}
S_2(x)\ll \epsilon^{10} x.
\end{equation}
Similarly, we substitute $u=x^{1/(\log\epsilon)^2}$ into \eqref{eq:S1Bound}. Using the assumption $z\le y\le x^{2\epsilon}$ and noting that $2-16/3\pi>3/10$, we find
\begin{align}
(\log{y})S_1
&\ll x\log{x}\Bigl(\log\frac{1}{\epsilon}\Bigr)^4\Bigl(\frac{\log{y}}{\log{x}}\Bigr)^{2-16/3\pi}\Bigl(\frac{\log{y}}{\log{z}}\Bigr)^{16/3\pi}\nonumber\\
&\ll \epsilon^{3/10} x\log{x}\Bigl(\frac{\log{y}}{\log{z}}\Bigr)^{2}.\label{eq:S1BoundFinal}
\end{align}
Recalling that $S_C(x)\ll (S_1+S_2)(\log{y})^r$, that $r\in\{1,2\}$, and inserting the bounds \eqref{eq:S1BoundFinal} and \eqref{eq:S2Bound}, we obtain
\[
S_C(x)\ll \epsilon^{10} x(\log{x})^r+\epsilon^{3/10}x(\log{x})^r\Bigl(\frac{\log{y}}{\log{z}}\Bigr)^2,
\]
as required.
\end{proof}

\subsection{Conclusion}
We choose
\[
y = x^\epsilon,\qquad z = x^{\epsilon/(\log\epsilon)^2}.
\]
With this choice, we see that $y$ and $z$ satisfy the assumptions of lemmas \ref{lem:Lacunary}, \ref{lem:NonCentral} and \ref{lem:Central}. Thus, provided $D^8\le x^{\epsilon^2}$ we may apply these Lemmas, giving
\begin{align*}
S(x)&\ll  S_L(x) + S_N(x) + S_C(x)\\
&\ll \epsilon^{-\kappa-2}x L(1,\chi)(\log{x})^{r+1}+\epsilon x(\log{x})^r+O_\epsilon(x^{1-\eta})+\epsilon^{3/10}x(\log{x})^r\Bigl(\log{\frac{1}{\epsilon}}\Bigr)^4
\end{align*}
for some quantity $\eta>0$ depending only on $\epsilon$. Reinterpreting~$\epsilon$, we obtain the claimed statement~\eqref{eq:Target}, which then gives Theorems \ref{thm:MainTheorem} and Theorem \ref{thm:TwoPrimes}.

\bibliography{kloosterman-siegel}
\bibliographystyle{amsalpha2}

\end{document}